\newtheorem{theorem}{Theorem}[section]
\newtheorem{lemma}[theorem]{Lemma}
\newtheorem{proposition}[theorem]{Proposition}
\theoremstyle{definition}
\newtheorem{definition}[theorem]{Definition}
\newtheorem{example}[theorem]{Example}
\theoremstyle{remark}
\newtheorem{remark}[theorem]{Remark}
\theoremstyle{proof}
\numberwithin{equation}{section}
\begin{document}
\setcounter{page}{1}
\vspace*{1.0cm}

\title[Approximate fixed point theorems of cyclical contraction mapping on $G$-metric spaces
]{Approximate fixed point theorems of cyclical contraction mapping on $G$-metric spaces } 
\author[S. A. M. Mohsenialhosseini] {  S. A. M. Mohsenialhosseini}
\date{}
\maketitle

\vskip 2mm

{\footnotesize \noindent {\bf Abstract.} In this paper, we will first introduce a new class of operators and  contraction mapping for a cyclical map $T$ on $G$-metric spaces and  the approximate fixed point property.  Also, we prove two general lemmas regarding  approximate fixed Point  of  cyclical contraction mapping on $G$-metric spaces.  Using these results we  prove several approximate fixed point theorems for a new class of operators such as Chatterjeat,  Zamfirescu, Mohseni, Mohsenialhosseini on $G$-metric spaces (not necessarily complete). 
These results can be exploited to establish new approximate fixed point  theorems for cyclical contraction maps on $G$-metric space. In addition, there is a new class of cyclical operators and contraction mapping on $G$-metric space (not necessarily complete) which do not need to be continuous. Finally, examples are given to support the usability of our results.

\vskip 1mm

\noindent {\bf Keywords:}   Approximate fixed points, G-Mohseni-semi cyclical operator
G-Mohseni cyclical operator,  G-Mohsenialhosseini cyclical operator, Diameter approximate fixed point.\vskip 1mm

\noindent {\bf 2010 AMS Subject Classification:}  47H10, 54H25, 46B20.}
\renewcommand{\thefootnote}{}
\footnotetext{ 
 Vali-e-Asr University, Rafsanjan, Iran,\\
e-mail: amah@vru.ac.ir; mohsenialhosseini@gmail.com
\par
 }
\vskip 6mm
\section{\bf\large  Introduction  }
\vskip 6mm
 Fixed point theory is a very popular tool in solving existence
problems in many branches of Mathematical Analysis and its applications. In physics and engineering fixed point technique has been used in areas like image retrieval,  signal processing and the study of existence and uniqueness of solutions for a class of nonlinear integral equations.
Some recent work on fixed point theorems of integral type in $G$-metric spaces,  stability of functional difference equation can be found in \cite {Sha,Raf} and the references therein.
  
 \par  In 1968, Kannan (see \cite {Kan} ) proved a fixed point theorem for operators which need not be continuous.
Further, Chatterjea (see\cite {Cha}), in 1972, also proved a fixed point theorem for discontinuous mapping, which is actually a kind of dual of Kannan mapping.
  In 1972, by combining the above three independent  contraction conditions above, Zamfirescu (see \cite{Zam}) obtained another fixed point result for operators which satisfy the following. In 2001, Rus (see \cite{Rus}) defined $\alpha-$contraction. 
  In \cite{Ber2}, the author obtained a different   contraction condition,  also he formulated a corresponding fixed point theorem.
 In 2006, Berinde (see \cite{Ber3}) obtained some result on $\alpha-$contraction for approximate fixed point in metric space.  Miandaragh et al. \cite{Mia,Mia1} obtained some result on approximate fixed points in metric space.   
\par On the other hand, in 2006, Mustafa and Sims \cite {Mus,Mus1} introduced the notion of
generalized metric spaces or simply G-metric spaces. Many researchers have obtained   fixd point, coupled fixed point, coupled common fixed point results on  G-metric spaces (see \cite {Ayd, Cha1, Sha}).
 \par In 2011, Mohsenalhosseini et al \cite {Moh},  introduced the approximate best proximity pairs and proved the approximate best proximity pairs  property for it. Also, In 2012 , Mohsenalhosseini et al \cite {Moh1}, introduced the approximate fixed point for completely  norm space and map $T_\alpha$ and proved the approximate fixed point  property for it. In 2014 , Mohsenalhosseini   \cite {Moh2}  introduced the  Approximate best proximity pairs on  metric space for contraction maps. Also, Mohsenalhosseini in \cite {Moh3} introduced the approximate fixed point in G-metric spaces for various types of operators. Recently,  in 2017 Mohsenialhosseini \cite {Moh4}  introduced the approximate fixed points  of operators  on $G$-metric spaces.
The aim of this paper is to introduce the new classes of operators and  contraction maps (not necessarily continuous) regarding approximate fixed point and diameter approximate fixed point for cyclical contraction mapping on $G$-metric spaces. Also, we give some illustrative example of our main results. 
\section{\bf\large Preliminaries}
This section recalls the following notations and the ones that will be used in what follows.
In 2003, kirk et al \cite {Kir}, obtained
an extension of Banach’s fixed point theorem by considering a cyclical operator.
\begin {definition} \cite {Kir} \label{1.1}
{\it Let $\{X_i\}_{i=1}^m$ be  nonempty susets of a complete metric space $X.$ A mapping $T:\cup_{i=1}^{m}X_i \rightarrow \cup_{i=1}^{m}X_i$ satisfies the following condition(where $X_{i+1}=X_1)$ 
 $$T(X_1)\subseteq X_2,...,T(X_{m-1})\subseteq X_m, T(X_m)\subseteq X_1,$$  
is called a cyclical operator.}
\end{definition}
\begin {definition}\cite {Mus}  \label{1.1} Let $X$ be a nonempty set and let $G : X \times X \times X \longrightarrow R^{+}$
be a function satisfying the following properties:\\
(G1) $ G(x, y, z) = 0 ~if ~and ~only~ if~ x = y = z; $\\
(G2) $ 0 < G(x,x,y) for~ all~ x, y \in X ~with~ x \neq y;$\\
(G3) $ G(x, x, y) \leq G(x, y, z) for~ all~ x, y, z \in X with ~z \neq y;$\\
(G4) $G(x, y, z) = G(x, z, y) = G(y, z, x) =\cdots  (symmetry~  in ~all~three~variables );$\\
(G5) $G(x, y, z) \leq G(x, a, a) + G(a, y, z) for ~all~ x, y, z, a \in X (~ rectangle~  inequality).$\\
Then, the function $G$ is called generalized metric or, more specifically 
 $G-metric$ on $X,$ and the pair $(X,G)$ is called a $G-metric$
space.
\end{definition} 
\begin{proposition} \label{2.2} \cite {Mus}
Every $G$-metric $(X, G)$ defines a metric space $(X, d_G)$ by \\
1) $d_G(x, y)= G(x, y, y)+ G(y, x, x).$\\
if $(X, G)$ is a symmetric $G$- metric space. Then \\
2) $d_G(x, y)= 2G(x, y, y).$
\end{proposition}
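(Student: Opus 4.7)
The plan is to verify, using the five axioms (G1)--(G5), that the function $d_G(x,y) = G(x,y,y) + G(y,x,x)$ satisfies the four defining properties of a metric on $X$; part (2) will then be an immediate consequence of the definition of symmetry for a $G$-metric.

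First I would dispose of non-negativity, the identity of indiscernibles, and symmetry in $(x,y)$. Non-negativity is built into the codomain $\mathbb{R}^+$. For identity of indiscernibles, if $x=y$ then (G1) gives $G(x,x,x)=0$, so $d_G(x,x)=0$; conversely, if $d_G(x,y)=0$, then since each summand is non-negative, both $G(x,y,y)=0$ and $G(y,x,x)=0$, and (G1) applied to either forces $x=y$. Symmetry of $d_G$ in its two arguments is immediate from the definition, since swapping $x$ and $y$ merely swaps the two summands.

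The substantive step is the triangle inequality $d_G(x,z)\le d_G(x,y)+d_G(y,z)$. I would obtain it by applying the rectangle inequality (G5) twice with a well-chosen intermediate point. Setting $a=y$ in (G5) yields
\[
G(x,z,z)\le G(x,y,y) + G(y,z,z),
\]
and similarly, using the full symmetry (G4) to rewrite the axiom with the roles of the variables permuted,
\[
G(z,x,x)\le G(z,y,y) + G(y,x,x).
\]
Adding these two inequalities and regrouping the right-hand side according to the definition of $d_G$ gives exactly $d_G(x,z)\le d_G(x,y)+d_G(y,z)$.

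For part (2), I would simply recall that a $G$-metric is called symmetric when $G(x,y,y)=G(y,x,x)$ holds for all $x,y\in X$; substituting this identity into the definition in (1) collapses the sum to $2G(x,y,y)$. The only mild obstacle is making sure the second application of (G5) is justified: the axiom as stated uses $a$ in a specific slot, so one must invoke (G4) to re-express $G(z,x,x)$ in the form to which (G5) applies, but this is purely bookkeeping rather than a genuine difficulty.
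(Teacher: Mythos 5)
Your verification is correct: the two applications of (G5) with $a=y$ (the second of which is in fact a direct substitution into the axiom, so the appeal to (G4) is not even needed) yield the triangle inequality, and the remaining metric axioms plus part (2) follow exactly as you say. The paper itself offers no proof of this proposition --- it is quoted from Mustafa and Sims \cite{Mus} --- and your argument is the standard one found there, so there is nothing to reconcile.
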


\begin {definition} \cite {Moh1} \label{1.3}
{\it Let $T:X\rightarrow X,$ $\epsilon>0,$  $x_0\in X.$
 Then $x_0\in X$ is an $\epsilon-$fixed point for $T$ if $\|Tx_0- x_0\|<\epsilon.$}
\end{definition}

\begin{remark}  \cite {Moh1} \label{1.2}
{\it In this paper we will denote the set of all $\epsilon-$ fixed points of $T$, for a
given $\epsilon$, by : 
\begin{eqnarray*}
F_\epsilon (T)=\{x\in X~|~x ~is~ an~ \epsilon-fixed~ point~of~T\}.
\end{eqnarray*}}
\end{remark}

\begin{definition} \cite {Moh1} Let $T:X\rightarrow X.$  Then $T$ has the approximate fixed point property (a.f.p.p) if $$\forall \epsilon>0,~F_\epsilon (T)\neq\varnothing.$$
\end{definition}

\begin{lemma}  \cite {Moh5} \label{1.6}
{\it Let $\{X_i\}_{i=1}^m$ be nonempty  subsets of a    metric space $X$ and  $T:\cup_{i=1}^{m}X_i \rightarrow \cup_{i=1}^{m}X_i$ be a  cyclical operator. Let  $x_0\in \cup_{i=1}^{m}X_i$ and $\epsilon>0.$ If $T:\cup_{i=1}^{m}X_i \rightarrow \cup_{i=1}^{m}X_i$ is asymptotically regular at  each point $x_0\in \cup_{i=1}^{m}X_i$,  then $T$ has an $\epsilon-$ fixed point.}
\end{lemma}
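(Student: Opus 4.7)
The plan is to read off the conclusion directly from the definition of asymptotic regularity, since the cyclical structure of $T$ is only used to ensure that the iterates $T^n x_0$ remain inside the domain $\cup_{i=1}^{m}X_i$. Recall that asymptotic regularity of $T$ at $x_0$ means
\[
\lim_{n\to\infty} d\bigl(T^{n+1}x_0,\, T^{n}x_0\bigr)=0.
\]
Given $\epsilon>0$, I would apply this limit to produce an integer $N=N(\epsilon)$ such that $d(T^{N+1}x_0,T^{N}x_0)<\epsilon$.

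Next, I would set $y_0 := T^{N}x_0$ and observe that, because $T$ is cyclical (hence maps $\cup_{i=1}^{m}X_i$ into itself), the iterate $y_0$ lies in $\cup_{i=1}^{m}X_i$, so it is a legitimate candidate point in the domain. The chosen $N$ then gives
\[
d(Ty_0, y_0) = d\bigl(T^{N+1}x_0, T^{N}x_0\bigr) < \epsilon,
\]
which by Definition 2.3 says precisely that $y_0 \in F_\epsilon(T)$. In particular $F_\epsilon(T)\neq\varnothing$, so $T$ has an $\epsilon$-fixed point.

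There is really no significant obstacle here; the lemma is a direct unpacking of the definitions of asymptotic regularity and of an $\epsilon$-fixed point. The only subtlety worth flagging is the (tacit) compatibility of notation: Definition 2.3 is stated with norm bars $\|\cdot\|$, but in the current metric-space setting one should read $\|Tx-x\|$ as $d(Tx,x)$; once this identification is made, the argument above is complete in a couple of lines. The cyclical hypothesis plays no role beyond keeping the orbit $\{T^n x_0\}$ inside $\cup_{i=1}^{m}X_i$.
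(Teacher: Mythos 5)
Your proof is correct and is exactly the standard argument: the paper does not reprove this lemma (it is quoted from \cite{Moh5}), but the intended proof is precisely the one you give, namely choosing $N$ from the definition of asymptotic regularity so that $d(T^{N+1}x_0,T^{N}x_0)<\epsilon$ and taking $y_0=T^{N}x_0\in F_\epsilon(T)$. Your remarks about the role of the cyclical hypothesis (only to keep the orbit in the domain) and the $\|\cdot\|$ versus $d(\cdot,\cdot)$ notation are both accurate.
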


\begin{lemma}  \cite {Moh5}\label{1.4}
 {\it Let $\{X_i\}_{i=1}^m$ be nonempty  subsets of a    metric space $X$ and  $T:\cup_{i=1}^{m}X_i \rightarrow \cup_{i=1}^{m}X_i$ be a  cyclical operator.  Let $x_0\in \cup_{i=1}^{m}X_i$ and $\epsilon>0.$ If $d(T^{n}(x_0),T^{n+k}(x_0)) \rightarrow 0$ as $n\rightarrow\infty$ for some $k>0,$  then $T^k$ has an $\epsilon-$ fixed point.}
\end{lemma}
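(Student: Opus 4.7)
The plan is to unpack the hypothesis directly and exhibit an $\epsilon$-fixed point of $T^k$ as a sufficiently late iterate of $x_0$. The key observation is that the sequence $\{T^n(x_0)\}$ lives in $\cup_{i=1}^m X_i$ (since $T$ is a self-map of this union by virtue of being cyclical), so any term of it is a legitimate candidate point at which to test the $\epsilon$-fixed point condition for $T^k$.

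Concretely, I would set $y_n := T^n(x_0)$ for $n \geq 0$. The assumption $d(T^n(x_0), T^{n+k}(x_0)) \to 0$ as $n \to \infty$ rewrites as $d(y_n, T^k y_n) \to 0$. Fix $\epsilon > 0$. By the definition of convergence, there exists $N = N(\epsilon) \in \mathbb{N}$ such that
\begin{equation*}
d\bigl(T^N(x_0),\, T^{N+k}(x_0)\bigr) < \epsilon.
\end{equation*}
Setting $z := T^N(x_0)$, we have $z \in \cup_{i=1}^m X_i$ and $T^k(z) = T^{N+k}(x_0)$, so $d(z, T^k z) < \epsilon$. By Definition~\ref{1.3} applied to the operator $T^k$, this says $z \in F_\epsilon(T^k)$, proving $F_\epsilon(T^k) \neq \varnothing$, which is precisely the conclusion.

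There is no real obstacle here: the lemma is essentially a tautological restatement of the hypothesis in terms of the $\epsilon$-fixed point set of the $k$-th iterate. The only point that needs a line of justification is that $z$ belongs to the domain $\cup_{i=1}^m X_i$, which is immediate because $T$ is a cyclical operator and hence $T^N$ maps $\cup_{i=1}^m X_i$ into itself. No structural use is made of the cyclical decomposition beyond the fact that the orbit stays in the domain; in particular one does not need to track which component $X_i$ the iterate $T^N(x_0)$ lies in.
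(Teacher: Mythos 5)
Your proof is correct: the hypothesis $d(T^{n}(x_0),T^{n+k}(x_0))\to 0$ is literally the statement that $d(y_n, T^k y_n)\to 0$ along the orbit, so picking $N$ with $d(T^N(x_0),T^{N+k}(x_0))<\epsilon$ exhibits $T^N(x_0)\in F_\epsilon(T^k)$, and your remark that the orbit stays in $\cup_{i=1}^{m}X_i$ is the only point needing mention. The paper itself states this lemma without proof, citing \cite{Moh5}, and your argument is the standard (essentially unique) one, so there is nothing to compare beyond noting agreement.
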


\begin{lemma} \cite {Moh5}\label{1.5} 
{\it Let $\{X_i\}_{i=1}^m$ be nonempty subsets of a metric space $X,$ $T:\cup_{i=1}^{m}X_i \rightarrow \cup_{i=1}^{m}X_i$ a  cyclical operator and $\epsilon >0.$
We assume that:
\item [\rm{(i)}]~  $F_\epsilon (T)\neq\emptyset;$
\item [\rm{(ii)}]~  $\forall \theta>0,~ \exists \phi(\theta)>0~such~that;$
\begin{eqnarray*}
d(x,y)-d(Tx,Ty) &\leq& \theta \Rightarrow d(x,y)\leq \phi(\theta),~\forall x,y\in F_\epsilon (T)\neq\emptyset. 
\end{eqnarray*}
Then: $$\delta(F_\epsilon (T))\leq \phi(2\epsilon).$$}
\end{lemma}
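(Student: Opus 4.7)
The plan is to start from any two $\epsilon$-fixed points $x, y \in F_\epsilon(T)$ and show that $d(x,y) \leq \phi(2\epsilon)$; since the bound is uniform in $x,y$, taking the supremum yields the diameter estimate.

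First I would exploit Definition 1.3 (in metric form): $x, y \in F_\epsilon(T)$ means $d(x,Tx) < \epsilon$ and $d(y,Ty) < \epsilon$. Next, applying the triangle inequality twice to the path $x \to Tx \to Ty \to y$ gives
\begin{equation*}
d(x,y) \leq d(x,Tx) + d(Tx,Ty) + d(Ty,y),
\end{equation*}
hence
\begin{equation*}
d(x,y) - d(Tx,Ty) \leq d(x,Tx) + d(y,Ty) < 2\epsilon.
\end{equation*}
The cyclical structure of $T$ plays no explicit role at this stage; it is already encoded in the hypothesis that $F_\epsilon(T) \neq \emptyset$ and that $T$ sends $\cup X_i$ into itself so that $Tx, Ty$ are well defined.

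Second, I would set $\theta := 2\epsilon$ and invoke hypothesis (ii): since the previous computation shows $d(x,y) - d(Tx,Ty) \leq \theta$, the implication in (ii) yields $d(x,y) \leq \phi(2\epsilon)$. Taking the supremum over $x,y \in F_\epsilon(T)$ gives $\delta(F_\epsilon(T)) \leq \phi(2\epsilon)$, which is the claim.

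I do not expect any real obstacle here: the hypothesis (ii) is essentially tailor-made so that the triangle inequality estimate above transfers directly into a diameter bound. The only subtlety worth mentioning is that (ii) requires the implication for \emph{all} pairs in $F_\epsilon(T)$, so one must take care that the single functional $\phi$ works uniformly; but this is built into the statement. If one wanted to be punctilious one could also note that the strict inequality $< 2\epsilon$ can be weakened to $\leq 2\epsilon$ before applying (ii), which is harmless.
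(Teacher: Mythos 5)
Your proof is correct and is exactly the standard argument this lemma rests on: the paper itself does not reprint a proof (it cites the result from \cite{Moh5}), but the intended argument there is precisely your triangle-inequality estimate $d(x,y)-d(Tx,Ty)\leq d(x,Tx)+d(y,Ty)<2\epsilon$ followed by an application of hypothesis (ii) with $\theta=2\epsilon$ and a supremum over pairs. No gaps.
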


\section{Main result}
We begin with two lemmas which will be used in order to prove all the results given in third section. Let $(X,G)$ be a  $G-metric$ space.
\begin {definition}
Let $\{X_i\}_{i=1}^m$ be  nonempty susets of a
  $G-metric$ space $X$ and $T:\cup_{i=1}^{m}X_i \rightarrow \cup_{i=1}^{m}X_i$ be a  cyclical operator. Let  $\epsilon>0$ and  $x_0 \in \cup_{i=1}^{m}X_i .$ Then  $x_0$ is an $\epsilon$- fixed point of $T$ if 
$$[G(x_0, Tx_0, Tx_0)+G(Tx_0, x_0, x_0)]< \epsilon.$$
\end{definition}

\begin{remark} In this paper we will denote the set of all $\epsilon$-fixed points of $T$, for a given $\epsilon$, by: 
$$F^{\epsilon}_G (T)=\{x\in \cup_{i=1}^{m}X_i\mid x~ is~ an~ \epsilon-fixed~ point~ of~T \}.$$
\end{remark}

\begin {definition} 
  Let $\{X_i\}_{i=1}^m$ be  nonempty closed subsets of a  $G-metric$ space $X,$ $T:\cup_{i=1}^{m}X_i \rightarrow \cup_{i=1}^{m}X_i$ be a  cyclical operator and $\epsilon >0$. We define diameter of the set $F^{ \epsilon}_G (T),$ i.e.,
\begin{equation*}
\delta(F^{ \epsilon}_G (T))=\sup \{G(x,y,z):~~x,y,z\in F^{\epsilon}_G (T)\}.
\end{equation*}
\end{definition}

\begin {definition} 
Let $\{X_i\}_{i=1}^m$ are closed subsets of a  $G-metric$ space $X$ and  $T:\cup_{i=1}^{m}X_i \rightarrow \cup_{i=1}^{m}X_i$ be a  cyclical operator. Then $T$ has the approximate fixed point property (a.f.p.p) if $\forall \epsilon>0$, $$F^{\epsilon}_G  (T)\neq \emptyset.$$
\end{definition}

\begin {definition}
Let $\{X_i\}_{i=1}^m$ are closed subsets of a  $G-metric$ space $X.$ A cyclical operator  $T:\cup_{i=1}^{m}X_i \rightarrow \cup_{i=1}^{m}X_i$ is
said to be asymptotically regular at a point $x \in \cup_{i=1}^{m}X_i,$ if 
\begin{equation*}
\lim _{n \rightarrow \infty} \{G(T^{n}x, T^{n+1}x, T^{n+1}x)+ G(T^{n+1}x, T^{n}x, T^{n}x) \} =0,
\end{equation*}
 where $T^n$ denotes the $n$th iterate of $T$ at $x.$
\end{definition}

\begin{lemma}  \label{2.5}
Let $\{X_i\}_{i=1}^m$ are closed subsets of a  $G-metric$ space $X.$  If  $T:\cup_{i=1}^{m}X_i\rightarrow \cup_{i=1}^{m}X_i$ is asymptotically regular at a point $x \in \cup_{i=1}^{m}X_i,$ Then $T$ has an approximate fixed point.
\end{lemma}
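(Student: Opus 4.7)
The plan is to unpack the definitions of asymptotic regularity and of $\epsilon$-fixed point, and observe that the iterate $T^{N}x$ for sufficiently large $N$ already satisfies the $\epsilon$-fixed point inequality. Since the definition of a.f.p.p.\ only requires that $F^{\epsilon}_G(T)$ be nonempty for every $\epsilon>0$, no further cyclic combinatorics are needed beyond checking that the iterates stay in $\cup_{i=1}^{m}X_i$.

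First I would fix an arbitrary $\epsilon>0$. By the hypothesis that $T$ is asymptotically regular at the point $x\in\cup_{i=1}^{m}X_i$, the sequence
\[
a_n := G(T^{n}x, T^{n+1}x, T^{n+1}x) + G(T^{n+1}x, T^{n}x, T^{n}x)
\]
tends to $0$ as $n\to\infty$. Hence there exists $N=N(\epsilon)\in\mathbb{N}$ such that $a_n<\epsilon$ for all $n\geq N$, and in particular $a_N<\epsilon$.

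Next I would set $x_0 := T^{N}x$. Since $T$ is a cyclical operator on $\cup_{i=1}^{m}X_i$, iterating $T$ keeps us inside the union, so $x_0\in\cup_{i=1}^{m}X_i$. Moreover $Tx_0 = T^{N+1}x$, so by the choice of $N$,
\[
G(x_0, Tx_0, Tx_0) + G(Tx_0, x_0, x_0) = a_N < \epsilon,
\]
which is precisely the defining inequality for $x_0\in F^{\epsilon}_G(T)$. Therefore $F^{\epsilon}_G(T)\neq\emptyset$, and since $\epsilon>0$ was arbitrary, $T$ has the approximate fixed point property.

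There is essentially no hard step here; the proof is a direct translation of the asymptotic regularity condition into the $\epsilon$-fixed point condition. The only mild point to keep in mind is that the sum of the two $G$-terms appearing in the asymptotic regularity hypothesis is exactly the quantity that defines an $\epsilon$-fixed point in the $G$-metric setting (compare with part (1) of Proposition~\ref{2.2}, where $d_G(x,y)=G(x,y,y)+G(y,x,x)$), so the two notions are calibrated to match up without any extra constants.
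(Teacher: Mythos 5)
Your proof is correct and is essentially the argument the paper intends: the paper itself only writes ``the proof is the same as that of Lemma \ref{1.6}'' (the metric-space version from the cited reference), and your direct unpacking --- choose $N$ with $a_N<\epsilon$ and take $x_0=T^{N}x$ --- is exactly that argument transplanted to the $G$-metric setting. You have in fact supplied more detail than the paper does.
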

\begin{proof}
The proof of Lemma is the same as the proof of Lemma \ref{1.6} for $x\in \cup_{i=1}^{m}X_i.$
\end{proof}
 
\begin{lemma} 
 {\it Let $\{X_i\}_{i=1}^m$ be nonempty  subsets of a    $G-metric$ space $X$ and  $T:\cup_{i=1}^{m}X_i \rightarrow \cup_{i=1}^{m}X_i$ be a  cyclical operator.  Let $x_0\in \cup_{i=1}^{m}X_i$ and $\epsilon>0.$ If
\begin{eqnarray*}
G(T^{n}(x_0),T^{n+k}(x_0),T^{n+k}(x_0)) +G(T^{n+k}(x_0),T^{n}(x_0),T^{n}(x_0))\rightarrow 0
\end{eqnarray*}
 as $n\rightarrow\infty$ for some $k>0,$  then $T^k$ has an $\epsilon-$ fixed point.}
\end{lemma}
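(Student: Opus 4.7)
The plan is to mirror the proof of Lemma \ref{1.4} in the $G$-metric setting, exploiting the fact that the quantity on the left-hand side of the hypothesis is exactly the expression that, evaluated at an iterate of $x_0$, measures whether that iterate is an $\epsilon$-fixed point of $T^k$.

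First I would invoke the definition of the limit: since
\[
G(T^{n}(x_0),T^{n+k}(x_0),T^{n+k}(x_0)) +G(T^{n+k}(x_0),T^{n}(x_0),T^{n}(x_0))\longrightarrow 0,
\]
for any prescribed $\epsilon>0$ there exists $N=N(\epsilon)\in\mathbb{N}$ such that, for every $n\geq N$,
\[
G(T^{n}(x_0),T^{n+k}(x_0),T^{n+k}(x_0)) +G(T^{n+k}(x_0),T^{n}(x_0),T^{n}(x_0))<\epsilon.
\]

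Next I would fix any such $n\geq N$ and set $y_{0}:=T^{n}(x_0)$. Because $T$ maps $\cup_{i=1}^{m}X_i$ into itself, the iterate $y_0$ belongs to $\cup_{i=1}^{m}X_i$, and moreover $T^{k}(y_0)=T^{n+k}(x_0)$. Substituting into the inequality above yields
\[
G(y_0, T^{k}y_0, T^{k}y_0)+G(T^{k}y_0, y_0, y_0)<\epsilon,
\]
which, by the definition of $\epsilon$-fixed point (applied to the self-map $T^{k}$ on $\cup_{i=1}^{m}X_i$), means precisely that $y_0\in F_G^{\epsilon}(T^{k})$. Hence $T^{k}$ has an $\epsilon$-fixed point.

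There is essentially no obstacle in this argument; the only point that needs a brief comment is that while $T$ is cyclical on $\{X_i\}_{i=1}^{m}$, the iterate $T^{k}$ is merely a self-map of $\cup_{i=1}^{m}X_i$ (it need not cycle through the subsets in the same structured way). But the definition of an $\epsilon$-fixed point in Definition~3.1 is purely pointwise on $\cup_{i=1}^{m}X_i$, so this causes no difficulty. The proof therefore reduces to a direct translation of the hypothesis into the $\epsilon$-fixed-point condition for $T^{k}$.
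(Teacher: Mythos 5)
Your argument is correct and is essentially the paper's own proof: the paper simply defers to the metric-space version (Lemma 2.8 of the preliminaries, from \cite{Moh5}), and your write-up is exactly that argument transplanted to the $G$-metric setting — choose $n$ large enough that the displayed quantity is below $\epsilon$ and observe that $T^{n}(x_0)$ is then, by definition, an $\epsilon$-fixed point of $T^{k}$. No gaps.
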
 
\begin{proof}
The proof of Lemma is the same as the proof of Lemma \ref{1.4} for $x\in \cup_{i=1}^{m}X_i.$
\end{proof}
 
\begin{lemma}  \label{2.8}
Let $\{X_i\}_{i=1}^m$ are closed subsets of a  $G-metric$ space $X,$ $T:\cup_{i=1}^{m}X_i\rightarrow \cup_{i=1}^{m}X_i$ a cyclical operator and $\epsilon >0$. We assume that: \\
a) $F^{ \epsilon}_G (T) \neq \emptyset;$ \\
b) $\forall \xi >0 ~\exists \psi( \xi)>0$ such that 
$$[G(x, y, y)+G(y, x, x)] - [G(Tx, Ty, Ty)+G(Ty, Tx, Tx)]< \xi\Longrightarrow$$
 $$ G(x, y, y)+G(y, x, x)\leq \psi( \xi),\quad \forall x, y \in F^{ \epsilon}_G (T).$$ Then: 
$$\delta (F^{ \epsilon}_G (T))\leq \psi (2\epsilon).$$
\end{lemma}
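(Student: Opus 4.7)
The plan is to mirror the argument for Lemma~\ref{1.5} in the metric setting, with the symmetric pair $G(u,v,v)+G(v,u,u)$ (which by Proposition~\ref{2.2} equals $d_G(u,v)$) playing the role of the pairwise distance.

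First I would fix an arbitrary pair $x,y\in F^{\epsilon}_G(T)$ and apply the rectangle inequality (G5) twice to each of $G(x,y,y)$ and $G(y,x,x)$, inserting $Tx$ and $Ty$ as intermediate points, to obtain
\begin{align*}
G(x,y,y) &\le G(x,Tx,Tx)+G(Tx,Ty,Ty)+G(Ty,y,y),\\
G(y,x,x) &\le G(y,Ty,Ty)+G(Ty,Tx,Tx)+G(Tx,x,x).
\end{align*}
Adding these and grouping the two $\epsilon$-fixed-point defects $G(x,Tx,Tx)+G(Tx,x,x)<\epsilon$ and $G(y,Ty,Ty)+G(Ty,y,y)<\epsilon$ produces
\[
\bigl[G(x,y,y)+G(y,x,x)\bigr]-\bigl[G(Tx,Ty,Ty)+G(Ty,Tx,Tx)\bigr]<2\epsilon.
\]
Hypothesis~(b) applied with $\xi=2\epsilon$ then yields $G(x,y,y)+G(y,x,x)\le\psi(2\epsilon)$ for every pair $x,y\in F^{\epsilon}_G(T)$.

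To pass from this pairwise bound to the three-variable diameter, I would take arbitrary $x,y,z\in F^{\epsilon}_G(T)$ and apply (G5) once more, with intermediate point $y$, giving $G(x,y,z)\le G(x,y,y)+G(y,y,z)$. The permutation symmetry (G4) rewrites $G(y,y,z)=G(z,y,y)$, and both $G(x,y,y)$ and $G(z,y,y)$ are individually controlled by the pairwise bound just established, because each is a single non-negative summand in a pair that is $\le\psi(2\epsilon)$. Taking the supremum over triples in $F^{\epsilon}_G(T)$ then delivers an estimate on $\delta(F^{\epsilon}_G(T))$.

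The main obstacle is this final step: the naive combination $G(x,y,z)\le G(x,y,y)+G(z,y,y)$ yields a constant factor in front of $\psi(2\epsilon)$, so some care is needed in how the two pairwise bounds are combined to land on exactly $\psi(2\epsilon)$ as stated (for instance, the argument goes through cleanly when the $G$-metric is symmetric, so that $G(u,v,v)=G(v,u,u)$ and each individual term is bounded by $\tfrac12\psi(2\epsilon)$). The preparatory estimate in the first two steps, by contrast, is a routine application of (G5) together with the definition of an $\epsilon$-fixed point.
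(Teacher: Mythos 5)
Your main estimate is exactly the argument the paper intends: the paper's own proof of Lemma \ref{2.8} is a one-line deferral to the metric-space Lemma \ref{1.5}, and your two applications of (G5) through $Tx$ and $Ty$, combined with the $\epsilon$-fixed-point bounds $G(x,Tx,Tx)+G(Tx,x,x)<\epsilon$ and $G(y,Ty,Ty)+G(Ty,y,y)<\epsilon$, give precisely the inequality $[G(x,y,y)+G(y,x,x)]-[G(Tx,Ty,Ty)+G(Ty,Tx,Tx)]<2\epsilon$ needed to invoke hypothesis (b) with $\xi=2\epsilon$. Up to that point your proposal is correct and coincides with the intended route.

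The difficulty you flag in the final step is genuine, and it is a defect of the lemma as stated rather than of your argument. The diameter is defined as the three-variable supremum $\sup\{G(x,y,z)\}$, while hypothesis (b) only controls the two-variable quantity $G(x,y,y)+G(y,x,x)$; the best general bound obtainable from (G5) and (G4) is $G(x,y,z)\le G(x,y,y)+G(z,y,y)\le 2\psi(2\epsilon)$, since each single term is nonnegative and dominated by its symmetric pair. The stated conclusion $\delta(F^{\epsilon}_G(T))\le\psi(2\epsilon)$ therefore does not follow without an extra hypothesis (symmetry of $G$, as you observe, gives $G(x,y,y)\le\tfrac12\psi(2\epsilon)$ and closes the gap) or without reading $\delta$ as the two-point diameter $\sup\{G(x,y,y)+G(y,x,x)\}$ --- which is in effect how the lemma is used in Theorems \ref{5.1}, \ref{5.3} and \ref{4.5}, where only pairs $x,y\in F^{\epsilon}_G(T)$ ever appear. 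The paper elides this point entirely by citing Lemma \ref{1.5}, in which $\delta$ is the ordinary two-point diameter. In short: your proof is complete under the two-point reading, yields $2\psi(2\epsilon)$ under the literal three-point reading, and your diagnosis of where the extra factor comes from is accurate.
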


\begin{proof}
 The proof of Lemma is the same as the proof of Lemma \ref{1.5} for $x\in \cup_{i=1}^{m}X_i.$
 \end{proof}
\section{ Approximate fixed point for several  operator on $G-$ metric spaces}
In this section a series of  qualitative and quantitative  results will be  obtained regarding the properties of approximate fixed point. Also, by using Proposition \ref{2.2} and lemma \ref{1.4} we  prove approximate fixed point theorems and diameter approximate theorems for a new class of cyclical operators on $G$-metric spaces.
\begin{definition} \cite{Moh3}
Let $(X, d)$ be a metric space. A mapping $T:X\rightarrow  X$ is a \textbf{Mohseni operator} if there exists $\alpha \in (0, \dfrac{1}{2})$ such that  
$$d(Tx, Ty)\leq \alpha [d(x, y)+  d(Tx, Ty)].$$
\end{definition}
\begin{definition}\cite {Moh5}
Let $\{X_i\}_{i=1}^m$ be nonempty subsets of a metric space $X,$ $T:\cup_{i=1}^{m}X_i \rightarrow \cup_{i=1}^{m}X_i$ is a  $\alpha-$cyclical contraction if there exists $\alpha \in (0, \dfrac{1}{2})$ such that 
 $$d(Tx, Ty)\leq \alpha d(x, y)\quad \forall x\in X_i,y\in X_{i+1}.$$
\end{definition}

\begin{definition} \label{3.2}
Let $\{X_i\}_{i=1}^m$ be nonempty subsets of a $G-metric$ space $X,$ $T:\cup_{i=1}^{m}X_i \rightarrow \cup_{i=1}^{m}X_i$ is a  $G-\alpha-$cyclical contraction if there exists $\alpha \in (0, 1)$ such that 
\begin{align*}
 G(Tx, Ty, Ty)+ G(Ty, Tx, Tx)]&\leq 
 \alpha [G(x, y, y)+  G(y, x, x) \\
&+ G(Tx, Ty, Ty)+ G(Ty, Tx, Tx)] \quad\forall x\in X_i,y\in X_{i+1}.
\end{align*}
\end{definition}

\begin{theorem}   \label{3.3} 
{\it Let $\{X_i\}_{i=1}^m$ be nonempty subsets of a  $G-metric$ metric space $X$ and 
 Suppose $T:\cup_{i=1}^{m}X_i \rightarrow \cup_{i=1}^{m}X_i$ is a  $G-\alpha-$cyclical contraction. 
Then $T$ has an $\epsilon-$fixed point.}
 \end{theorem}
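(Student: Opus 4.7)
The plan is to apply Lemma \ref{2.5}: it suffices to exhibit a point $x_0$ at which $T$ is asymptotically regular, i.e.\ for which the quantity
\begin{equation*}
a_n \;:=\; G(T^n x_0, T^{n+1} x_0, T^{n+1} x_0) + G(T^{n+1} x_0, T^n x_0, T^n x_0)
\end{equation*}
tends to $0$. Starting from an arbitrary $x_0 \in \bigcup_{i=1}^m X_i$, I would set $x_n := T^n x_0$ and observe that the cyclic condition forces $x_{n-1}$ and $x_n$ to lie in consecutive sets $X_i$ and $X_{i+1}$, so the $G$-$\alpha$-cyclical contraction hypothesis applies to the pair $(x_{n-1}, x_n)$.

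Apply Definition \ref{3.2} to this pair. The left side becomes exactly $a_n$, and the right side becomes $\alpha\bigl(a_{n-1} + a_n\bigr)$. Rearranging produces the one-step estimate
\begin{equation*}
a_n \;\le\; \frac{\alpha}{1-\alpha}\, a_{n-1}.
\end{equation*}
Assuming $\alpha \in (0,\tfrac{1}{2})$ (the range for which the ratio $\alpha/(1-\alpha)$ is strictly less than $1$; this is the range consistent with the Mohseni operator convention referenced above, and is what the argument actually requires), iteration yields $a_n \le \bigl(\alpha/(1-\alpha)\bigr)^n a_0 \to 0$. In particular, for any prescribed $\epsilon>0$ one can choose $n$ large enough that $a_n < \epsilon$, so $x_n \in F^{\epsilon}_G(T)$.

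The only step requiring care is the rearrangement: one must first absorb the $\alpha a_n$ term from the right-hand side into the left, and then divide by $1-\alpha$, which is the moment where the size of $\alpha$ matters. After that, Lemma \ref{2.5} (asymptotic regularity $\Rightarrow$ approximate fixed point) finishes the proof immediately, since asymptotic regularity at the single point $x_0$ is sufficient for the existence of an $\epsilon$-fixed point for arbitrary $\epsilon>0$. No completeness or continuity of $T$ is invoked, in keeping with the spirit of the paper.
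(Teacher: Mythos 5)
Your proof is correct and takes essentially the same route as the paper: show that the orbit of an arbitrary $x_0$ is asymptotically regular by iterating the contraction inequality, then invoke Lemma \ref{2.5}. If anything, your version is tighter than the paper's, whose displayed chain writes $a_n \le \alpha\, a_{n-1}$ directly (dropping the self-referential term $\alpha a_n$ that the definition places on the right-hand side) and, like you, must silently replace the stated range $\alpha\in(0,1)$ by $\alpha\in(0,\tfrac{1}{2})$ so that the ratio $\alpha/(1-\alpha)$ is below $1$; your explicit rearrangement and your flagging of the restriction on $\alpha$ are exactly the right way to handle both points.
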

{\bf Proof:} 
 Let $\epsilon>0$ and $x\in \cup_{i=1}^{m}X_i .$ 
 \begin{eqnarray*}
G(T^{n}x,T^{n+1}x,T^{n+1}x) +G(T^{n+1}x,T^{n}x,T^{n}x)&=& 
G(T(T^{n-1}x),T(T^{n}x),T(T^{n}x)) \\
&+& G(T(T^{n}x),T(T^{n-1}x),T(T^{n-1}x))\\
&\leq &\alpha G(T^{n-1}x,T^{n}x,T^{n}x) +G(T^{n}x,T^{n-1}x,T^{n-1}x)\\
&\vdots & \\
& \leq & (\alpha)^n G(x,Tx,Tx) +G(Tx,x,x).
\end{eqnarray*}
But $\alpha \in (0,\dfrac{1}{2})$. Hence
\begin{equation*}
lim_{n\rightarrow \infty} (G(T^{n}x,T^{n+1}x,T^{n+1}x) +G(T^{n+1}x,T^{n}x,T^{n}x))=0, \forall x\in \cup_{i=1}^{m}X_i.
\end{equation*}
Hence by  Lemma \ref{2.5} it follows that $F^{\epsilon}_G (T)\neq\emptyset, \forall \epsilon>0.~\blacksquare$//

In 1972, Chatterjea (see \cite {Cha}) considered another  operator in which continuity is not imposed. Now, the  appoximate fixed point theorems by using cyclical operators  on $G$-metric spaces are obtained. 

\begin{definition} 
Let $\{X_i\}_{i=1}^m$ be nonempty subsets of a $G-metric$ metric space $X,$ $T:\cup_{i=1}^{m}X_i \rightarrow \cup_{i=1}^{m}X_i$ is a  G-Chatterjea cyclical operator if there exists $\alpha\in(0,\frac{1}{2})$ such that 
\begin{align*}
 [G(Tx, Ty, Ty)+ G(Ty, Tx, Tx)]&\leq 
 \alpha [G(x, Ty, Ty)+  G(Ty, x, x) \\
&+ G(y, Tx, Tx)+ G(Tx, y, y)] \quad\forall x\in X_i,y\in X_{i+1}.
\end{align*}
\end{definition}

\noindent {\bf Theorem 2.4.} 
{\it Let $\{X_i\}_{i=1}^m$ be nonempty subsets of a $G-metric$ space $X.$  Suppose that the mapping  $T:\cup_{i=1}^{m}X_i \rightarrow \cup_{i=1}^{m}X_i$ is a  G-Chatterjea cyclical operator. 
Then $T$ has an $\epsilon-$fixed point.}

{\bf Proof:} Let $\epsilon>0$ and $x\in \cup_{i=1}^{m}X_i.$ 
\begin{eqnarray*}
[G(T^{n}x,T^{n+1}x,T^{n+1}x) +G(T^{n+1}x,T^{n}x,T^{n}x)]&=& 
[G(T(T^{n-1}x),T(T^{n}x),T(T^{n}x)) \\
&+& G(T(T^{n}x),T(T^{n-1}x),T(T^{n-1}x))]\\
&\leq &\alpha [G(T^{n-1}x,T(T^{n}x),T(T^{n}x))+G(T(T^{n}x),T^{n-1}x,T^{n-1}x)\\
&+& G(T^{n}x,T(T^{n-1}x),T(T^{n-1}x)) +G(T(T^{n-1}x),T^{n}x,T^{n}x)]\\
&= &\alpha (G(T^{n-1}x,T^{n+1}x,T^{n+1}x)+G(T^{n+1}x,T^{n-1}x,T^{n-1}x)).\\
\end{eqnarray*}
On the other hand
\begin{eqnarray*}
G(T^{n-1}x,T^{n+1}x,T^{n+1}x)+G(T^{n+1}x,T^{n-1}x,T^{n-1}x)\\
&\leq & [G(T^{n-1}x,T^{n}x,T^{n}x)+ G(T^{n}x,T^{n-1}x,T^{n-1}x) \\
&+&G(T^{n}x,T^{n+1}x,T^{n+1}x)+ G(T^{n+1}x,T^{n}x,T^{n}x)].
\end{eqnarray*}
Then 
\begin{eqnarray*}
(1-\alpha)(G(T^{n}x,T^{n+1}x,T^{n+1}x)+ G(T^{n+1}x,T^{n}x,T^{n}x))
&\leq & \alpha(G(T^{n-1}x,T^{n}x,T^{n}x)+ G(T^{n}x,T^{n-1}x,T^{n-1}x)), 
\end{eqnarray*}
 hence
\begin{eqnarray*}
(G(T^{n}x,T^{n+1}x,T^{n+1}x)+ G(T^{n+1}x,T^{n}x,T^{n}x))&\leq &\frac{\alpha}{1-\alpha}(G(T^{n-1}x,T^{n}x,T^{n}x)+ G(T^{n}x,T^{n-1}x,T^{n-1}x))\\
&&\vdots\\
& \leq &(\frac{\alpha}{1-\alpha})^n(G(x,Tx,Tx)+G(Tx,x,x)).
\end{eqnarray*}
But $\alpha\in(0,\frac{1}{2})$ hence $\frac{\alpha}{1-\alpha}\in (0,1).$ Therfore
\begin{equation*}
lim_{n\rightarrow \infty} (G(T^{n}x,T^{n+1}x,T^{n+1}x) +G(T^{n+1}x,T^{n}x,T^{n}x))=0, \forall x\in \cup_{i=1}^{m}X_i.
\end{equation*}
Hence by  Lemma \ref{2.5} it follows that $F^{\epsilon}_G (T)\neq\emptyset, \forall \epsilon>0.~\blacksquare$
\begin{definition} \label{3.2}
Let $\{X_i\}_{i=1}^m$ be nonempty subsets of a $G-metric$ space $X,$ $T:\cup_{i=1}^{m}X_i \rightarrow \cup_{i=1}^{m}X_i$ is a  \textbf{G-Mohseni cyclical operator} if there exists $\alpha \in (0, \dfrac{1}{2})$ such that 

\begin{align*}
 [G(Tx, Ty, Ty)+ G(Ty, Tx, Tx)]&\leq 
 \alpha [G(x, y, y)+  G(y, x, x) \\
&+ G(Tx, Ty, Ty)+ G(Ty, Tx, Tx)] \quad\forall x\in X_i,y\in X_{i+1}.
\end{align*}
\end{definition}

\begin{theorem}   \label{3.3} 
{\it Let $\{X_i\}_{i=1}^m$ be nonempty subsets of a $G-metric$ space $X$ and 
 Suppose $T:\cup_{i=1}^{m}X_i \rightarrow \cup_{i=1}^{m}X_i$ is a  G-Mohseni cyclical operator. 
Then $T$ has an $\epsilon-$fixed point.}
 \end{theorem}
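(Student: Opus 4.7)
The plan is to mimic the strategy used for the G-$\alpha$-cyclical contraction (Theorem 3.3) and the G-Chatterjea operator (Theorem 2.4): show that $T$ is asymptotically regular at an arbitrary point $x\in \cup_{i=1}^m X_i$, and then invoke Lemma \ref{2.5} to conclude that $F^{\epsilon}_G(T)\neq\emptyset$ for every $\epsilon>0$.

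First I would fix $x\in\cup_{i=1}^m X_i$ and, for brevity, set
$a_n := G(T^n x,T^{n+1}x,T^{n+1}x)+G(T^{n+1}x,T^n x,T^n x).$
Because $T$ is cyclical, consecutive iterates $T^{n-1}x$ and $T^n x$ lie in consecutive sets $X_i$ and $X_{i+1}$, so the G-Mohseni inequality applies to the pair $(T^{n-1}x,T^n x)$. Substituting $x\leftarrow T^{n-1}x$ and $y\leftarrow T^n x$ into the defining inequality yields
$a_n \;\le\; \alpha\bigl(a_{n-1}+a_n\bigr).$

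The key algebraic step is to absorb the $a_n$ term appearing on the right-hand side into the left. Since $\alpha\in(0,\tfrac12)$ we have $1-\alpha>\tfrac12>0$, so the rearrangement is legitimate and gives
$a_n \;\le\; \frac{\alpha}{1-\alpha}\,a_{n-1}.$
Because $\alpha<\tfrac12$, the ratio $\frac{\alpha}{1-\alpha}$ lies strictly in $(0,1)$. Iterating the recurrence produces
$a_n \;\le\; \Bigl(\tfrac{\alpha}{1-\alpha}\Bigr)^{n} a_0 \;=\; \Bigl(\tfrac{\alpha}{1-\alpha}\Bigr)^{n}\bigl[G(x,Tx,Tx)+G(Tx,x,x)\bigr],$
so $a_n\to 0$ as $n\to\infty$. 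This is exactly the asymptotic regularity of $T$ at $x$ in the G-metric sense defined in Section 3.

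Finally, applying Lemma \ref{2.5} to this $T$ yields an $\epsilon$-fixed point for every $\epsilon>0$, which completes the proof. I do not anticipate any real obstacle here: the only nontrivial move is the rearrangement that isolates $a_n$, and it works precisely because of the hypothesis $\alpha<\tfrac12$, which is the same reason the earlier Chatterjea-type theorem (Theorem 2.4) went through. The argument is essentially identical in structure to Theorem 3.3, with the contraction factor $\alpha$ replaced by $\tfrac{\alpha}{1-\alpha}$.
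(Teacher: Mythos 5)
Your proposal is correct and follows essentially the same route as the paper: apply the G-Mohseni inequality to the consecutive iterates $T^{n-1}x$ and $T^{n}x$, absorb the $a_n$ term to obtain $a_n\le\frac{\alpha}{1-\alpha}a_{n-1}$ with $\frac{\alpha}{1-\alpha}\in(0,1)$ since $\alpha<\frac12$, iterate to get asymptotic regularity, and conclude via Lemma \ref{2.5}. No substantive differences from the paper's argument.
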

{\bf Proof:} 
 Let $\epsilon>0$ and $x\in \cup_{i=1}^{m}X_i .$ 
 \begin{eqnarray*}
[G(T^{n}x,T^{n+1}x,T^{n+1}x) +G(T^{n+1}x,T^{n}x,T^{n}x)]&=& 
[G(T(T^{n-1}x),T(T^{n}x),T(T^{n}x)) \\
&+& G(T(T^{n}x),T(T^{n-1}x),T(T^{n-1}x))]\\
&\leq &\alpha [G(T^{n-1}x,T^{n}x,T^{n}x)+G(T^{n}x,T^{n-1}x,T^{n-1}x)\\
&+& G(T^{n}x,T^{n+1}x,T^{n+1}x) +G(T^{n+1}x,T^{n}x,T^{n}x)].
\end{eqnarray*}
Therefore, 
\begin{equation*}
(1-\alpha)[G(T^{n}x,T^{n+1}x,T^{n+1}x) +G(T^{n+1}x,T^{n}x,T^{n}x)]\leq \alpha G(T^{n-1}x,T^{n}x,T^{n}x)+G(T^{n}x,T^{n-1}x,T^{n-1}x).
\end{equation*}
So,
\begin{eqnarray*}
[G(T^{n}x,T^{n+1}x,T^{n+1}x) +G(T^{n+1}x,T^{n}x,T^{n}x)]&\leq &  \frac{\alpha}{(1-\alpha)} [G(T^{n-1}x,T^{n}x,T^{n}x)+G(T^{n}x,T^{n-1}x,T^{n-1}x)]\\
&&\vdots \\
& \leq & (\frac{\alpha}{(1-\alpha)})^n(G(x,Tx,Tx)+G(Tx,x,x)).
\end{eqnarray*}
But $\alpha \in (0,\dfrac{1}{2})$, therefore $( \dfrac{\alpha}{1-\alpha}) \in (0, 1).$ Hence
\begin{equation*}
lim_{n\rightarrow \infty} (G(T^{n}x,T^{n+1}x,T^{n+1}x) +G(T^{n+1}x,T^{n}x,T^{n}x))=0, \forall x\in \cup_{i=1}^{m}X_i.
\end{equation*}
Hence by  Lemma \ref{2.5} it follows that $F^{\epsilon}_G (T)\neq\emptyset, \forall \epsilon>0.~\blacksquare$ 
\begin{example} Consider the sets:
$A_1=\{\frac{1}{k}\}_{k=1}^{\infty}\cup\{\frac{-1}{2k}\}_{k=1}^{\infty}$ and
$A_2=\{\frac{-1}{k}\}_{k=1}^{\infty}\cup\{\frac{1}{2k-1}\}_{k=1}^{\infty}.$ 
Define the map  $T:\cup_{i=1}^{2}A_i\rightarrow \cup_{i=1}^{2}A_i$ as
\[Tx=\left\{\begin {array}{cl}
\frac{-x}{x+4}\quad if \quad x\in A_1\\
\\
\frac{-x}{4}\quad if \quad x\in A_2
 \end{array}\right.\]
It is easily to be checked that $T(A_1) \subseteq A_2$  and $T(A_2) \subseteq A_1.$
For any $x \in A_1$ and $y\in A_2$ and Proposition \ref{2.2} we have the chain of inequalities 
\begin{eqnarray*}
[G(Tx, Ty, Ty)+ G(Ty, Tx, Tx)]=d_G(Tx,Ty)&= & |\frac{x}{x+4}-\frac{y}{4}|\\
&\leq &\frac{1}{3}(|x|+|y|)\\
&\leq &\frac{1}{3}(|x-y|+|\frac{x}{x+4}-\frac{y}{4}|)\\
&= &\frac{1}{3}(d_G(x,y)+d_G(Tx,Ty))\\
&=&\frac{1}{3} [G(x, y, y)+  G(y, x, x) + G(Tx, Ty, Ty)+ G(Ty, Tx, Tx)].
\end{eqnarray*}
So $T$ satisfies all the conditions of Theorem \ref{3.3} and thus it has a approximate fixed point. 
\end{example}
\begin{example} \label{3.8}
 Let $X$ be a subset in $R$ endowed with the usual metric. Suppose $A_1=]0,0.8]$ and $A_2=]0,\frac{1}{2}].$ 
 Define the map $T:\cup_{i=1}^{2}A_i\rightarrow \cup_{i=1}^{2}A_i$ as
$Tx=\frac{x}{4}$ for all $x\in\cup_{i=1}^{2}A_i.$
It is easily to be checked that  $T(A_1) \subseteq A_2$  and $T(A_2) \subseteq A_1.$
For any $x \in A_1$ and $y\in A_2$ and Proposition \ref{2.2} we have the chain of inequalities
\begin{eqnarray*}
[G(Tx, Ty, Ty)+ G(Ty, Tx, Tx)]=d_G(Tx,Ty)&= & |\frac{x}{4}-\frac{y}{4}|\\
&\leq &\frac{1}{3}(|x-y|+|\frac{x}{4}-\frac{y}{4}|)\\
&= &\frac{1}{3}(d_G(x,y)+d_G(Tx,Ty))\\
&=&\frac{1}{3} [G(x, y, y)+  G(y, x, x) + G(Tx, Ty, Ty)+ G(Ty, Tx, Tx)].
\end{eqnarray*}
\end{example}
So $T$ satisfies all the conditions of Theorem \ref{3.3} and thus for every $\epsilon>0,$ $ F_{ \epsilon} (T))\neq\emptyset.$ on the other hand take $0<\epsilon<\frac{1}{2}$ and select $x_0\in\cup_{i=1}^{2}A_i$ such that $x_0<\frac{4}{3}\epsilon.$ Then 
\begin{equation*}
d(Tx ,x) =|\frac{x}{4}-x|\leq \epsilon.
\end{equation*}
Hence by by Proposition \ref{2.2}, $G(Tx,x,x)+G(x,Tx,Tx)\leq \epsilon$.
So $T$ has an approximate fixed point which implies that $ F_{ \epsilon} (T))\neq\emptyset.$ On the contrary, there is no fixed point of $T$ in $\cup_{i=1}^{2}A_i.$

\par By combining the three independent contraction conditions: $G-\alpha-$cyclical contraction, G-Mohseni cyclical, and G-Chatterjea cyclical operators we obtain another  approximate fixed point result for operators which satisfy the following.

\begin{definition} \label{3.9}
Let $\{X_i\}_{i=1}^m$ be nonempty subsets of a $G-metric$ space $X,$  $T:\cup_{i=1}^{m}X_i \rightarrow \cup_{i=1}^{m}X_i$ is a  \textbf{G-Mohsenialhosseini cyclical operator} if there exists $\alpha,\beta,\gamma \in R,$ 
$\alpha\in[0,1[,\beta\in [0,\frac{1}{2}[,\gamma\in [0,\frac{1}{2}[$ such that for all $x \in _{i=1}^{m}X_i ,y\in X_{i+1}$
 at least one of the following is true:
 \begin{equation*}
(i) G(Tx, Ty, Ty)+ G(Ty, Tx, Tx)]\leq 
 \alpha [G(x, y, y)+  G(y, x, x) + G(Tx, Ty, Ty)+ G(Ty, Tx, Tx)];
\end{equation*}
\begin{equation*}
(ii) [G(Tx, Ty, Ty)+ G(Ty, Tx, Tx)]\leq 
 \beta  [G(x, y, y)+  G(y, x, x) + G(Tx, Ty, Ty)+ G(Ty, Tx, Tx)];
\end{equation*}
 \begin{equation*}
(iii)  [G(Tx, Ty, Ty)+ G(Ty, Tx, Tx)]\leq 
\gamma [G(x, Ty, Ty)+  G(Ty, x, x) + G(y, Tx, Tx)+ G(Tx, y, y)].
\end{equation*}
\end{definition}
\begin{theorem} \label{3.10}
{\it Let $\{X_i\}_{i=1}^m$ be nonempty subsets of a $G-metric$ metric space $X$ and 
 Suppose $T:\cup_{i=1}^{m}X_i \rightarrow \cup_{i=1}^{m}X_i$ is a  G-Mohsenialhosseini cyclical operator. 
Then $T$ has an $\epsilon-$fixed point.}
\end{theorem}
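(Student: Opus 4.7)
The strategy is to show that $T$ is asymptotically regular at every $x_{0} \in \cup_{i=1}^{m} X_{i}$, and then invoke Lemma~\ref{2.5} to extract an $\epsilon$-fixed point for every $\epsilon > 0$.

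Fix $x_{0} \in \cup_{i=1}^{m} X_{i}$ and set $a_{n} := G(T^{n}x_{0}, T^{n+1}x_{0}, T^{n+1}x_{0}) + G(T^{n+1}x_{0}, T^{n}x_{0}, T^{n}x_{0})$. Cyclicity of $T$ ensures that, for each $n \geq 1$, the iterates $T^{n-1}x_{0}$ and $T^{n}x_{0}$ lie in consecutive sets $X_{j}$ and $X_{j+1}$, so Definition~\ref{3.9} applies to the pair $(T^{n-1}x_{0}, T^{n}x_{0})$: at each such step at least one of (i), (ii), (iii) holds. If (i) (respectively (ii)) holds at step $n$, the right-hand side simplifies to $\alpha(a_{n-1}+a_{n})$ (resp.\ $\beta(a_{n-1}+a_{n})$), so isolating $a_{n}$ yields $a_{n} \leq \frac{\alpha}{1-\alpha} a_{n-1}$ (resp.\ $a_{n} \leq \frac{\beta}{1-\beta} a_{n-1}$), exactly as in the G-Mohseni calculation carried out just above. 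If (iii) holds, the two terms of the form $G(T^{n}x_{0}, T^{n}x_{0}, T^{n}x_{0})$ that appear on the right-hand side vanish by (G1), leaving $a_{n} \leq \gamma\bigl[G(T^{n-1}x_{0}, T^{n+1}x_{0}, T^{n+1}x_{0}) + G(T^{n+1}x_{0}, T^{n-1}x_{0}, T^{n-1}x_{0})\bigr]$; the rectangle inequality (G5) applied to each term, together with the symmetry (G4), bounds the bracketed quantity by $a_{n-1}+a_{n}$, and rearranging gives $a_{n} \leq \frac{\gamma}{1-\gamma} a_{n-1}$, exactly as in the G-Chatterjea computation.

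Setting $\lambda := \max\{\alpha/(1-\alpha),\ \beta/(1-\beta),\ \gamma/(1-\gamma)\}$, which is strictly less than $1$ under the stated ranges of the constants, we therefore have $a_{n} \leq \lambda\, a_{n-1}$ for every $n \geq 1$, irrespective of which case is active at step $n$. Iterating yields $a_{n} \leq \lambda^{n} a_{0} \to 0$, so $T$ is asymptotically regular at $x_{0}$; Lemma~\ref{2.5} then delivers $F^{\epsilon}_{G}(T) \neq \emptyset$ for every $\epsilon > 0$.

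The only non-formal step is case (iii), where the rectangle inequality must be invoked to pass from the ``skip-one'' $G$-quantities back to the one-step quantities $a_{n-1}$ and $a_{n}$. Once the three per-step estimates are in hand, bundling them through the single ratio $\lambda$ is what allows the active inequality to vary arbitrarily with $n$, so no subsequential bookkeeping (splitting according to which of (i)--(iii) holds infinitely often) is ever needed.
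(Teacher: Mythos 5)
Your overall strategy is the paper's: establish asymptotic regularity of $T$ at an arbitrary point via a one-step contraction estimate on $a_n$, then invoke Lemma \ref{2.5}. The execution, however, differs in a substantive way. The paper first converts conditions (ii) and (iii) into two ``uniform'' inequalities (the displayed (4.4) and (4.5)), bounding $G(Tx,Ty,Ty)+G(Ty,Tx,Tx)$ by $2\eta\,[G(x,Tx,Tx)+G(Tx,x,x)]+\eta\,[G(x,y,y)+G(y,x,x)]$ with $\eta=\max\{\alpha,\beta/(1-\beta),\gamma/(1-\gamma)\}$, and only then substitutes $x=T^{n-1}x_0$, $y=T^{n}x_0$, arriving at $a_n\le 3\eta\, a_{n-1}$. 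You instead substitute the consecutive iterates directly into whichever of (i)--(iii) is active and rearrange, obtaining $a_n\le\lambda a_{n-1}$ with $\lambda=\max\{\alpha/(1-\alpha),\beta/(1-\beta),\gamma/(1-\gamma)\}$; your case (iii) computation (annihilate the two terms of the form $G(y,Tx,Tx)$ and $G(Tx,y,y)$ by (G1), then apply the rectangle inequality (G5)) is exactly the paper's Chatterjea argument and is correct. Your per-step ratio $\lambda$ is strictly sharper than the paper's $3\eta$, and your closing observation that a single ratio valid in all three cases removes any need to track which condition holds at which step is the right one.

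There is one claim that does not survive scrutiny: that $\lambda$ ``is strictly less than $1$ under the stated ranges of the constants.'' Definition \ref{3.9} only requires $\alpha\in[0,1)$, so $\alpha/(1-\alpha)$ can be arbitrarily large, and your argument is complete only when $\alpha<\tfrac{1}{2}$. This is not a defect you introduced: the paper's own proof needs $3\eta<1$, which fails even more broadly (already for $\beta$ or $\gamma$ in $[\tfrac{1}{4},\tfrac{1}{2})$), and condition (i) with $\alpha\ge\tfrac{1}{2}$ is satisfied by any nonexpansive map, e.g.\ a translation of $\mathbb{R}$, for which the conclusion is false. Still, you should state the restriction $\alpha<\tfrac{1}{2}$ explicitly rather than asserting that the ranges given in Definition \ref{3.9} suffice.
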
 
{\bf Proof:} Let $x,y \in \cup_{i=1}^{m}X_i.$ Supposing $ii$) holds, we have that:
\begin{eqnarray*}
[G(Tx, Ty, Ty)+ G(Ty, Tx, Tx)]&\leq & 
 \beta  [G(x, y, y)+  G(y, x, x) + G(Tx, Ty, Ty)+ G(Ty, Tx, Tx)];\\
&\leq &\beta [G(x,Tx, Tx)+  G(Tx, x, x) + G(Tx, y, y)+ G(y, Tx, Tx)\\
 &+&G(Tx, Ty, Ty)+ G(Ty, Tx, Tx)]\\
&\leq &\beta [G(x,Tx, Tx)+  G(Tx, x, x)+G(Tx, x, x)+ G(x, Tx, Tx)\\
&+& G(x, y, y)+  G(y, x, x)+G(Tx, Ty, Ty)+ G(Ty, Tx, Tx)]\\
&= &2\beta [G(x,Tx, Tx)+  G(Tx, x, x)]+\beta [G(x, y, y)\\
&+&  G(y, x, x)]+\beta [G(Tx, Ty, Ty)+ G(Ty, Tx, Tx)].
\end{eqnarray*}
Thus
\begin{equation}
[G(Tx, Ty, Ty)+ G(Ty, Tx, Tx)]\leq \frac{2\beta}{1-\beta}[G(x,Tx, Tx)+ G(Tx, x, x)]+\frac{\beta}
{1-\beta}[G(x, y, y)+  G(y, x, x)].
\end{equation}

Supposing $iii$
) holds, we have that:
\begin{eqnarray*}
[G(Tx, Ty, Ty)+ G(Ty, Tx, Tx)]&\leq& \gamma[G(x,Ty, Ty)+  G(Ty, x, x) +[G(y,Tx, Tx)+  G(Tx, y, y) ]\\
&\leq &\gamma [G(x, y, y)+  G(y, x, x)+G(y, Ty, Ty)+ G(Ty, y, y)]\\
&+&\gamma [G(y, Ty, Ty)+ G(Ty, y, y)+G(Ty, Tx, Tx)+ G(Tx, Ty, Ty)]\\
&=&\gamma [G(Tx, Ty, Ty)+ G(Ty, Tx, Tx)]+2\gamma [G(y, Ty, Ty)\\
&+& G(Ty, y, y)]+\gamma [G(x, y, y)+  G(y, x, x)].
\end{eqnarray*}
Thus
\begin{equation}
[G(Tx, Ty, Ty)+ G(Ty, Tx, Tx)]\leq\frac{2\gamma}{1-\gamma}[G(y, Ty, Ty)+ G(Ty, y, y)]+\frac{\gamma}{1-\gamma}[G(x, y, y)+  G(y, x, x)].
\end{equation}

Similarly:
\begin{eqnarray*}
[G(Tx, Ty, Ty)+ G(Ty, Tx, Tx)]&\leq& \gamma[G(x, Ty, Ty)+ G(Ty, x, x)+G(y, Tx, Tx)+ G(Tx, y, y)]\\
&\leq &\gamma [G(x, Tx, Tx)+ G(Tx, x, x)+G(Tx, Ty, Ty)+ G(Ty, Tx, Tx)]\\
&+&\gamma [[G(y, x, x)+  G(x, y, y)+G(x, Tx, Tx)+ G(Tx, x, x)]\\
&=&\gamma [G(Tx, Ty, Ty)+ G(Ty, Tx, Tx)]\\
&+&2\gamma [G(x, Tx, Tx)+ G(Tx, x, x)]+\gamma [G(x, y, y)+  G(y, x, x)].
\end{eqnarray*}
Then
\begin{equation}
[G(Tx, Ty, Ty)+ G(Ty, Tx, Tx)]\leq\frac{2\gamma}{1-\gamma}[G(x, Tx, Tx)+ G(Tx, x, x)]+\frac{\gamma}{1-\gamma} [G(x, y, y)+  G(y, x, x)].
\end{equation}
Therefore for $T$ satisfying at least one of the conditions (i), (ii), (iii) we have that
\begin{equation}
[G(Tx, Ty, Ty)+ G(Ty, Tx, Tx)]\leq 2\eta [G(x, Tx, Tx)+ G(Tx, x, x)]+\eta [G(x, y, y)+  G(y, x, x)],
\end{equation}
and
\begin{equation}
[G(Tx, Ty, Ty)+ G(Ty, Tx, Tx)]\leq 2\eta [G(y, Ty, Ty)+ G(Ty, y, y)]+\eta [G(x, y, y)+  G(y, x, x)],
\end{equation}
where  
$\eta:=max\{\alpha,\frac{\beta}{1-\beta},\frac{\gamma}{1-\gamma}\},$ hold. Using these conditions implied by (i) - (iii) and taking $x\in \cup_{i=1}^{m}X_i,$ we have:
\begin{eqnarray*}
[G(T^{n}x,T^{n+1}x,T^{n+1}x) +G(T^{n+1}x,T^{n}x,T^{n}x)]&=& 
[G(T(T^{n-1}x),T(T^{n}x),T(T^{n}x)) \\
&+& G(T(T^{n}x),T(T^{n-1}x),T(T^{n-1}x))]\\
&\leq^{(4.4)}&2\eta [G(T^{n-1}x,T(T^{n-1})x,T(T^{n-1})x)\\
&+&G(T(T^{n-1})x,T^{n-1}x,T^{n-1}x)]\\
&+& \eta[G(T^{n-1}x,T^{n}x,T^{n}x) +G(T^{n}x,T^{n-1}x,T^{n-1}x)]\\
&=& 3\eta [G(T^{n-1}x,T^{n}x,T^{n}x) +G(T^{n}x,T^{n-1}x,T^{n-1}x)]\\
&&\vdots \\
& \leq &(3\eta)^n(G(x,Tx,Tx)+G(Tx,x,x)).
\end{eqnarray*}
Therfore
\begin{equation*}
Lim_{n\rightarrow\infty}[G(T^{n}x,T^{n+1}x,T^{n+1}x) +G(T^{n+1}x,T^{n}x,T^{n}x)] =0, ~\forall x\in \cup_{i=1}^{m}X_i.
\end{equation*}
Now by Lemma \ref{2.5} it follows that $F^{\epsilon}_G (T)\neq\emptyset, \forall \epsilon>0.$ $~\blacksquare$
\begin{example} \label{4.12} 
 Let $X=[0,\infty)$ and let $d$ be usual metric on $X.$ Suppose $A_1=[0.1,2]$ and $A_2=[0.1,1].$ 
 Define the map $T:\cup_{i=1}^{2}A_i\rightarrow \cup_{i=1}^{2}A_i$ as
\[Tx=\left\{\begin {array}{cl}
0\quad \quad  x\in [0,1-\beta)\\
\frac{x}{4}\qquad   x\in [1-\beta,1)\\
\frac{1-\beta}{4}\qquad \quad x\in [1,2]
 \end{array}\right.\]
It is easily to be checked that  $T(A_1) \subseteq A_2$  and $T(A_2) \subseteq A_1.$
For any $x ,y\in \cup_{i=1}^{2}A_i$  there exists $\alpha \in (0,\frac{1}{2})$ such that holds at least  one of the condition Theorem \ref {3.10}.
Thus by Proposition \ref{2.2} and Theorem \ref {3.10} for every $\epsilon>0,$ $F^{\epsilon}_G (T)\neq\emptyset.$
\end{example}

\begin {definition}
Let $\{X_i\}_{i=1}^m$ be nonempty subsets of a $G-metric$  space $X,$  $T:\cup_{i=1}^{m}X_i \rightarrow \cup_{i=1}^{m}X_i$ is a \textbf{G-Mohseni-semi cyclical operator}  if there exists $\alpha\in ]0,\frac{1}{2}[$  such that 
\begin{equation*}
[G(Tx, Ty, Ty)+ G(Ty, Tx, Tx)]\leq\alpha ([G(x, y, y)+  G(y, x, x)]+[G(x, Tx, Tx)+ G(Tx, x, x)]),
\end{equation*}
$\forall x\in X_i,y\in X_{i+1}.$
\end{definition}
\begin{theorem} \label{3.19}
{\it Let $\{X_i\}_{i=1}^m$ be nonempty subsets of a $G-metric$ space $X$ and 
 Suppose $T:\cup_{i=1}^{m}X_i \rightarrow \cup_{i=1}^{m}X_i$ is a G-Mohseni-semi cyclical operator. 
 Then: 
$$\forall \epsilon>0,~ F_\epsilon(T)\neq\emptyset.$$}
\end{theorem}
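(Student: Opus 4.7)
The plan is to mirror the strategy used for Theorems 3.3 (the $G$-$\alpha$-cyclical and $G$-Mohseni cases): fix an arbitrary $x\in\cup_{i=1}^{m}X_i$, set
\[
a_n := G(T^{n}x,T^{n+1}x,T^{n+1}x)+G(T^{n+1}x,T^{n}x,T^{n}x),
\]
and show $a_n\to 0$. Once asymptotic regularity is established at every $x$, Lemma \ref{2.5} immediately yields $F^{\epsilon}_G(T)\neq\emptyset$ for every $\epsilon>0$.

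First, I would apply the defining inequality of a \textbf{G-Mohseni-semi cyclical operator} with the substitution $x\mapsto T^{n-1}x$ and $y\mapsto T^{n}x$. This substitution is legal because the orbit of $x$ cycles through $X_1,X_2,\ldots,X_m,X_1,\ldots$, so $T^{n-1}x$ and $T^{n}x=T(T^{n-1}x)$ always lie in consecutive sets $X_i,X_{i+1}$. The key observation is that with $y=Tx$ the two bracketed quantities on the right-hand side of the defining inequality coincide: both reduce to $[G(T^{n-1}x,T^{n}x,T^{n}x)+G(T^{n}x,T^{n-1}x,T^{n-1}x)]=a_{n-1}$. Hence a single line of algebra gives the recurrence
\[
a_n\leq 2\alpha\,a_{n-1}.
\]

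Iterating, $a_n\leq (2\alpha)^n\bigl[G(x,Tx,Tx)+G(Tx,x,x)\bigr]$. Since $\alpha\in(0,\tfrac12)$ we have $2\alpha\in(0,1)$, and therefore $a_n\to 0$ as $n\to\infty$ for every $x\in\cup_{i=1}^{m}X_i$. This is exactly the asymptotic regularity condition of Definition preceding Lemma \ref{2.5}, so Lemma \ref{2.5} supplies an approximate fixed point, i.e.\ $F^{\epsilon}_G(T)\neq\emptyset$ for all $\epsilon>0$.

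There is no serious obstacle: the argument is structurally identical to the G-Mohseni cyclical case, with the only (minor) difference being that the self-distance term on the right-hand side collapses onto the consecutive-iterate term after the substitution $y=Tx$, producing the clean contraction factor $2\alpha$ directly, with no $(1-\alpha)$ to move to the other side. The one point worth verifying explicitly is that the cyclic condition permits the substitution $y=Tx$; this is immediate from $T(X_i)\subseteq X_{i+1}$ in Definition \ref{1.1}. Everything else is a routine geometric-series bound.
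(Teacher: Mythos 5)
Your proof is correct and essentially identical to the paper's: the author likewise substitutes $x\mapsto T^{n-1}x$, $y\mapsto T^{n}x$ into the defining inequality, observes that both bracketed terms on the right collapse to $G(T^{n-1}x,T^{n}x,T^{n}x)+G(T^{n}x,T^{n-1}x,T^{n-1}x)$, iterates to get the $(2\alpha)^{n}$ bound with $2\alpha\in(0,1)$, and concludes by Lemma \ref{2.5}. There are no differences worth noting.
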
 
{\bf Proof:} Let  $x\in \cup_{i=1}^{m}X_i.$ 
\begin{eqnarray*}
[G(T^{n}x,T^{n+1}x,T^{n+1}x) +G(T^{n+1}x,T^{n}x,T^{n}x)] &=&[G(T(T^{n-1}x),T(T^{n}x),T(T^{n}x)) \\
&+& G(T(T^{n}x),T(T^{n-1}x),T(T^{n-1}x))]\\
&\leq &\alpha [G(T^{n-1}x,T^{n}x,T^{n}x) + G(T^{n}x,T^{n-1}x,T^{n-1}x)]\\
&+& \alpha [G(T^{n-1}x,T^{n}x,T^{n}x) + G(T^{n}x,T^{n-1}x,T^{n-1}x)]\\
&= &2\alpha [G(T^{n-1}x,T^{n}x,T^{n}x) + G(T^{n}x,T^{n-1}x,T^{n-1}x)]\\
& &\vdots\\
&\leq & (2\alpha)^n [G(x, Tx, Tx)+ G(Tx, x, x)]).
\end{eqnarray*}

But $\alpha\in]0,\frac{1}{2}[.$ Therfore
\begin{equation*}
Lim_{n\rightarrow\infty}[G(T^{n}x,T^{n+1}x,T^{n+1}x) +G(T^{n+1}x,T^{n}x,T^{n}x)] =0, ~\forall x\in \cup_{i=1}^{m}X_i.
\end{equation*}
Now by  Lemma \ref{2.5}, it follows that $F^{\epsilon}_G (T)\neq\emptyset, \forall \epsilon>0.$
$~\blacksquare$

\begin{example} \label{4.15} Let $X$ be a subset in $R$ endowed with the usual metric. Suppose $A_1=[0.01,0.8]$ and $A_2=[0.01,\frac{1}{2}].$ Suppose $X_1=[0.01,0.8]$ and $X_2=[0.01,\frac{1}{2}].$ 
 Define the map $T:\cup_{i=1}^{2}A_i\rightarrow \cup_{i=1}^{2}A_i$ as
$Tx=\frac{x}{4}$ for all $x\in\cup_{i=1}^{2}A_i.$
It is easily to be checked that  $T(A_1) \subseteq A_2$  and $T(A_2) \subseteq A_1.$
For any $x \in A_1$ and $y\in A_2$ and Proposition \ref{2.2} we have the chain of inequalities 
\begin{eqnarray*}
[G(Tx, Ty, Ty)+ G(Ty, Tx, Tx)]=d_G(Tx,Ty)&= & |\frac{x}{4}-\frac{y}{4}|\\
&\leq &\frac{1}{3}(|x-y|+|x-\frac{x}{4})\\
&= &\frac{1}{3}(d_G(x,y)+d_G(x,Tx))\\
&=&\frac{1}{3} [G(x, y, y)+  G(y, x, x) + G(x, Tx, Tx)+ G(Tx, x, x)].
\end{eqnarray*}
So $T$ satisfies all the conditions of Theorem \ref{3.19} and thus for every $\epsilon>0,$ $ F^{\epsilon}_G (T)\neq\emptyset.$
\end{example}
\section{ Diameter approximate fixed point for several  operator on $G-$ metric spaces}
In this section,  using Lemma \ref{2.8},  quantitative  results for new  cyclical operators will be formulated  and proved,  and  some results regarding diameter approximate fixed point of such operators on $G-$ metric spaces were given.
\begin{theorem}  \label{5.1}
{\it  Let $\{X_i\}_{i=1}^m$ be nonempty subsets of a metric space $X.$ Suppose that $T:\cup_{i=1}^{m}X_i \rightarrow \cup_{i=1}^{m}X_i$ is a  $G-$Mohseni cyclical operator. Then for every $\epsilon>0,$ $$ \delta (F_{ \epsilon} (T))\leq \frac{2\epsilon(1+\alpha)} {1-2\alpha}.$$}
 \end{theorem}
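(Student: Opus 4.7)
The strategy is to invoke Lemma \ref{2.8} with $T$ as the cyclical operator there. Lemma \ref{2.8} has two hypotheses: (a) $F^{\epsilon}_G(T)\neq\emptyset$, which was already established earlier for G-Mohseni cyclical operators; and (b) the existence of a modulus function $\psi$ bounding $G(x,y,y)+G(y,x,x)$ in terms of its one-step ``defect'' under $T$ on pairs in $F^{\epsilon}_G(T)$. The entire task therefore reduces to constructing an explicit $\psi$ from the G-Mohseni contraction inequality.

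To construct $\psi$, I would abbreviate $P := G(x,y,y)+G(y,x,x)$ and $Q := G(Tx,Ty,Ty)+G(Ty,Tx,Tx)$ for $x\in X_i$, $y\in X_{i+1}$. The defining inequality $Q\leq \alpha(P+Q)$ of a G-Mohseni cyclical operator rearranges to $Q\leq \tfrac{\alpha}{1-\alpha}P$, so
\[
P - Q \;\geq\; P - \tfrac{\alpha}{1-\alpha}\,P \;=\; \tfrac{1-2\alpha}{1-\alpha}\,P.
\]
Since $\alpha\in(0,\tfrac{1}{2})$ this coefficient is strictly positive, and the implication $P - Q < \xi \Rightarrow P < \tfrac{1-\alpha}{1-2\alpha}\,\xi$ follows at once. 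Setting $\psi(\xi):=\tfrac{1-\alpha}{1-2\alpha}\,\xi$ therefore verifies hypothesis (b) of Lemma \ref{2.8}, and plugging $\xi=2\epsilon$ into the conclusion $\delta(F^{\epsilon}_G(T))\leq\psi(2\epsilon)$ supplies the quantitative estimate.

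The main point where I expect to need caution is the precise numerical constant: the derivation above produces $\delta(F^{\epsilon}_G(T))\leq\tfrac{2\epsilon(1-\alpha)}{1-2\alpha}$, whereas the stated bound carries $(1+\alpha)$ in the numerator. Before committing to either expression I would recompute $\psi$ by a second route, namely substituting the triangle-inequality estimate $P \leq [G(x,Tx,Tx)+G(Tx,x,x)] + Q + [G(y,Ty,Ty)+G(Ty,y,y)] \leq 2\epsilon + Q$ directly into $Q\leq \tfrac{\alpha}{1-\alpha}P$, which again yields $P\cdot\tfrac{1-2\alpha}{1-\alpha}\leq 2\epsilon$. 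The agreement of the two derivations strongly suggests the stated $(1+\alpha)$ is a typographical slip rather than evidence of a genuinely different argument. Apart from settling this constant, the proof is a one-line invocation of Lemma \ref{2.8}.
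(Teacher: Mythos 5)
Your argument is correct and establishes the theorem; in fact it proves the sharper bound $\delta(F^{\epsilon}_G(T))\leq \tfrac{2\epsilon(1-\alpha)}{1-2\alpha}$, which implies the stated one since $1-\alpha\leq 1+\alpha$. The skeleton is the same as the paper's (nonemptiness from Theorem \ref{3.3}, then Lemma \ref{2.8}), but the verification of hypothesis (b) is genuinely different, and this is exactly where your constant diverges from the paper's. You rearrange $Q\leq\alpha(P+Q)$ into $Q\leq\tfrac{\alpha}{1-\alpha}P$ and obtain an $\epsilon$-free modulus $\psi(\xi)=\tfrac{1-\alpha}{1-2\alpha}\xi$, never using that $x,y$ are $\epsilon$-fixed points. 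The paper instead keeps the contraction inequality in the form $Q\leq\alpha(P+Q)$ and bounds the inner $Q$ by the rectangle inequality $Q\leq P+2\epsilon$ (which is where $G(x,Tx,Tx)+G(Tx,x,x)\leq\epsilon$ and its analogue for $y$ enter), arriving at $Q\leq 2\alpha P+2\alpha\epsilon$ and hence $\phi(\theta)=\tfrac{2\alpha\epsilon+\theta}{1-2\alpha}$; evaluating at $\theta=2\epsilon$ gives precisely $\tfrac{2\epsilon(1+\alpha)}{1-2\alpha}$. So the $(1+\alpha)$ is not a typographical slip: it is the constant that the paper's own (valid but less efficient) chain of inequalities produces, while your direct rearrangement simply yields a better one. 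One caveat, which your proof shares with the paper's and which you may wish to flag: the G-Mohseni condition is only postulated for $x\in X_i$, $y\in X_{i+1}$, whereas Lemma \ref{2.8}(b) quantifies over \emph{all} pairs $x,y\in F^{\epsilon}_G(T)$, which need not lie in consecutive sets; neither argument addresses this mismatch.
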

 
 {\bf Proof:} Let $\epsilon>0.$ Condition i) in Lemma \ref{2.8} is satisfied, as one can see in the proof
of Theorem \ref{3.3} we only verify that condition ii) in  Lemma \ref{2.8} holds.
Let $\theta>0$ and $x,y \in F^{ \epsilon}_G (T)$ and assume that 
Then:
\begin{eqnarray*}
[G(x, y, y)+G(y, x, x)] - [G(Tx, Ty, Ty)+G(Ty, Tx, Tx)]< \theta.
\end{eqnarray*}
Then:
\begin{eqnarray*}
[G(x, y, y)+G(y, x, x)] &\leq& \alpha [G(x, y, y)+G(y, x, x)] + [G(Tx, Ty, Ty)+G(Ty, Tx, Tx)]+\theta.
\end{eqnarray*}
Therefore
As $x,y\in F^{\epsilon} _G(T),$ we know that 
\begin{eqnarray*}
G(x, Tx, Tx)+ G(Tx, x, x)\leq \epsilon,G(y, Ty, Ty)+ G(Ty, y, y)\leq \epsilon.
\end{eqnarray*}
Therfore, $ G(x, y, y)+G(y, x, x)\leq \frac{2\alpha\epsilon+\theta}{1-2\alpha}.$ 
So for every $\theta>0$ there exists $\phi(\theta)=\frac{2\alpha\epsilon+\theta}{1-2\alpha}>0$ such that
\begin{eqnarray*}
[G(x, y, y)+G(y, x, x)] - [G(Tx, Ty, Ty)+G(Ty, Tx, Tx)]< \theta  \Rightarrow G(x, y, y)+G(y, x, x)]\leq\phi(\theta).
\end{eqnarray*} 
 Now by Lemma \ref{2.8}, it follows that $$\delta (F^{ \epsilon}_G (T)) \leq \phi(2\epsilon),\forall \epsilon>0,$$ which means exactly that
\begin{eqnarray*}
\delta (F^{ \epsilon}_G (T)) \leq \frac{2\epsilon(1+\alpha)}{1-2\alpha}.~\blacksquare
\end{eqnarray*}

\begin{example} Let $X$ be a subset in $R$ endowed with the usual metric. Suppose $A_1=[0.01,0.8]$ and $A_2=[0.01,\frac{1}{2}].$ Suppose $X_1=[0.01,0.8]$ and $X_2=[0.01,\frac{1}{2}].$ 
 Define the map $T:\cup_{i=1}^{2}A_i\rightarrow \cup_{i=1}^{2}A_i$ as
$Tx=\frac{x}{4}$ for all $x\in\cup_{i=1}^{2}A_i.$
 \par By example \ref{4.15}  $T:\cup_{i=1}^{m}X_i \rightarrow \cup_{i=1}^{m}X_i$ is a  Mohseni cyclical operator. So $T$ satisfies all the conditions of Theorem \ref{5.1} and thus for every $\epsilon>0,$ $$ \delta (F_{ \epsilon} (T))\leq \frac{2\epsilon(1+\alpha)}
 {1-2\alpha}.$$
 \end{example}
 \begin{theorem}  \label{5.3}
{\it Let $\{X_i\}_{i=1}^m$ be nonempty subsets of a metric space $X.$ Suppose that $T:\cup_{i=1}^{m}X_i \rightarrow \cup_{i=1}^{m}X_i$ is a  G-Mohsenialhosseini cyclical operator.
 Then for every
 $\epsilon>0,$ $$ \delta (F_{ \epsilon} (T))\leq 2\epsilon\frac{1+\eta}{1-\eta},$$
 where $ \eta:=max\{\alpha, \frac{\beta}{1-\beta},\frac{\gamma}{1-\gamma}\},$
 and $\alpha, \beta, \gamma$ as in Definition  \ref{3.9}}.
 \end{theorem}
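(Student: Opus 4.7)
The plan is to apply Lemma \ref{2.8} directly, exactly as was done in Theorem \ref{5.1}, but this time leveraging the unified inequality that was derived en route in the proof of Theorem \ref{3.10}. Condition (i) of Lemma \ref{2.8}, namely $F^{\epsilon}_G(T)\neq\emptyset$, is already delivered by Theorem \ref{3.10}. All the work lies in verifying condition (ii) and then substituting $\theta=2\epsilon$.

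For condition (ii), I would reuse inequality (4.4) from the proof of Theorem \ref{3.10}, which shows that whenever $T$ satisfies at least one of (i)--(iii) in Definition \ref{3.9},
\[
[G(Tx,Ty,Ty)+G(Ty,Tx,Tx)] \leq 2\eta[G(x,Tx,Tx)+G(Tx,x,x)] + \eta[G(x,y,y)+G(y,x,x)],
\]
with $\eta=\max\{\alpha,\tfrac{\beta}{1-\beta},\tfrac{\gamma}{1-\gamma}\}\in[0,1)$. Fix $x,y\in F^{\epsilon}_G(T)$ and assume $[G(x,y,y)+G(y,x,x)]-[G(Tx,Ty,Ty)+G(Ty,Tx,Tx)]<\theta$. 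Combining the two gives
\[
[G(x,y,y)+G(y,x,x)]<\theta+2\eta[G(x,Tx,Tx)+G(Tx,x,x)]+\eta[G(x,y,y)+G(y,x,x)].
\]
Since $x\in F^{\epsilon}_G(T)$, the bracket $[G(x,Tx,Tx)+G(Tx,x,x)]$ is at most $\epsilon$, and solving for $[G(x,y,y)+G(y,x,x)]$, using $1-\eta>0$, gives
\[
[G(x,y,y)+G(y,x,x)]\leq \frac{\theta+2\eta\epsilon}{1-\eta}=:\psi(\theta).
\]

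Hence condition (ii) of Lemma \ref{2.8} holds with this $\psi$, so Lemma \ref{2.8} yields $\delta(F^{\epsilon}_G(T))\leq\psi(2\epsilon)=\frac{2\epsilon+2\eta\epsilon}{1-\eta}=2\epsilon\frac{1+\eta}{1-\eta}$, which is the claimed bound.

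The only subtlety worth flagging is the check that $\eta<1$: this follows immediately from $\alpha\in[0,1)$ and $\beta,\gamma\in[0,\tfrac{1}{2})$, which forces both $\tfrac{\beta}{1-\beta}$ and $\tfrac{\gamma}{1-\gamma}$ to lie in $[0,1)$, so the denominator $1-\eta$ is strictly positive. The rest of the argument is a routine repackaging of the proof of Theorem \ref{5.1}, using the stronger auxiliary inequality extracted in Theorem \ref{3.10} in place of the single-line G-Mohseni estimate; no new ideas beyond that substitution are required.
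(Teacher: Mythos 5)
Your proposal is correct and follows essentially the same route as the paper: verify condition (ii) of Lemma \ref{2.8} by combining the auxiliary inequality $[G(Tx,Ty,Ty)+G(Ty,Tx,Tx)]\leq 2\eta[G(x,Tx,Tx)+G(Tx,x,x)]+\eta[G(x,y,y)+G(y,x,x)]$ from the proof of Theorem \ref{3.10} with the bound $G(x,Tx,Tx)+G(Tx,x,x)\leq\epsilon$ for $x\in F^{\epsilon}_G(T)$, obtaining $\psi(\theta)=\frac{2\eta\epsilon+\theta}{1-\eta}$ and then evaluating at $\theta=2\epsilon$. Your explicit check that $\eta<1$ is a small but welcome addition that the paper leaves implicit.
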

{\bf Proof:} In the proof of Theorem \ref{3.10}, we have already shown that if $T$ satisfies at least
one of the conditions $(i), (ii), (iii)$ from Definition \ref{3.9}, then
\begin{eqnarray*}
[G(Tx, Ty, Ty)+G(Ty, Tx, Tx)]\leq 2\eta [G(x, Tx, Tx)+ G(Tx, x, x)] + \eta [G(x, y, y)+G(y, x, x)],
\end{eqnarray*}
and
\begin{eqnarray*}
[G(Tx, Ty, Ty)+G(Ty, Tx, Tx)]\leq 2\eta [G(x, Ty, Ty)+ G(Ty, y, y)] + \eta  [G(x, y, y)+G(y, x, x)],
\end{eqnarray*}
hold.
Let $\epsilon>0.$  We will only verify that condition (ii) in Lemma \ref{2.8} is satisfied,
as (i) holds, see the Proof of  Theorem \ref{3.10}.\\
Let $\theta>0$ and $x,y \in F^{\epsilon}_G (T),$ and assume that $[G(x, y, y)+G(y, x, x)]-[G(Tx, Ty, Ty)+G(Ty, Tx, Tx)]\leq \theta.$ Then
\begin{eqnarray*}
[G(x, y, y)+G(y, x, x)]& \leq & [G(Tx, Ty, Ty)+G(Ty, Tx, Tx)]+ \theta\Rightarrow
\end{eqnarray*}
\begin{eqnarray*}
[G(x, y, y)+G(y, x, x)] &\leq & 2\eta [G(x, Tx, Tx)+ G(Tx, x, x)] +\eta [G(x, y, y)+G(y, x, x)] +\theta\Rightarrow
\end{eqnarray*}
\begin{eqnarray*}
(1-\eta)[G(x, y, y)+G(y, x, x)]\leq 2\eta\epsilon+\theta
\end{eqnarray*}
\begin{eqnarray*}
[G(x, y, y)+G(y, x, x)]\leq \frac{2\eta \epsilon+\theta}{1-\eta}.
\end{eqnarray*}
So for every $\theta>0$ there exists $\phi(\theta)=\frac{2\eta \epsilon+\theta}{1-\eta}>0$ such that 
$$[G(x, y, y)+G(y, x, x)]-[G(Tx, Ty, Ty)+G(Ty, Tx, Tx)]\leq \theta \Rightarrow [G(x, y, y)+G(y, x, x)]\leq\phi(\theta).$$ Now by Lemma \ref{2.8}, it follows that $$\delta (F_{ \epsilon} (T)) \leq \phi(2\epsilon),\forall \epsilon>0,$$ which means exactly that
\begin{eqnarray*}
\delta (F_{ \epsilon} (T)) \leq 2\epsilon\frac{1+\eta}{1-\eta},~\forall \epsilon>0.~\blacksquare
\end{eqnarray*}

\begin{example} Let $X=[0,\infty)$ and let $d$ be usual metric on $X.$ Suppose $A_1=[0.1,2]$ and $A_2=[0.1,1].$ Fix $\beta\in (0,1)$ and define  $T:\cup_{i=1}^{2}A_i\rightarrow \cup_{i=1}^{2}A_i$ as 
\[Tx=\left\{\begin {array}{cl}
0\quad \quad  x\in [0,1-\beta)\\
\frac{x}{4}\qquad   x\in [1-\beta,1)\\
\frac{1-\beta}{4}\qquad \quad x\in [1,2]
 \end{array}\right.\]
 \par By example \ref{4.12}  $T:\cup_{i=1}^{m}X_i \rightarrow \cup_{i=1}^{m}X_i$ is a  G-Mohsenialhosseini cyclical operator. So $T$ satisfies all the conditions of Theorem \ref{5.3} and thus for every $\epsilon>0,$
  It is easy to  check that $T(A_1) \subseteq A_2$  and $T(A_2) \subseteq A_1.$
For any $x,y \in \cup_{i=1}^{2}A_i$  there exists $\alpha \in (0,\frac{1}{2})$ such that holds at least  one of the condition Theorem \ref {3.10}.
Thus by Theorem \ref {3.10} for every $\epsilon>0,$ $F^{\epsilon}_G (T)\neq\emptyset.$
\end{example}
\begin{theorem}  \label{4.5} 
{\it Let $\{X_i\}_{i=1}^m$ be nonempty subsets of a metric space $X.$ Suppose that $T:\cup_{i=1}^{m}X_i \rightarrow \cup_{i=1}^{m}X_i$ is a  G-Mohseni-semi cyclical operator. Then for every $\epsilon>0,$ $$ \delta (F^{\epsilon}_G (T))\leq \epsilon\frac{2+\alpha}{1-\alpha}.$$}
 \end{theorem}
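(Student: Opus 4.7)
The plan is to invoke Lemma \ref{2.8} in exactly the same template used in the proofs of Theorems \ref{5.1} and \ref{5.3}, namely to check the two hypotheses (a) and (b) and then read off $\psi(2\epsilon)$. Condition (a), that $F^{\epsilon}_G(T)\neq\emptyset$, is already provided by Theorem \ref{3.19}, so there is nothing new to do there. The work lies in verifying condition (b) with a carefully chosen $\psi$.

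To verify (b), fix $\xi>0$ and take $x\in X_i,\ y\in X_{i+1}$ with $x,y\in F^{\epsilon}_G(T)$, assuming
\[
[G(x,y,y)+G(y,x,x)]-[G(Tx,Ty,Ty)+G(Ty,Tx,Tx)]<\xi .
\]
I would then apply the G-Mohseni-semi cyclical inequality to replace $[G(Tx,Ty,Ty)+G(Ty,Tx,Tx)]$ by $\alpha([G(x,y,y)+G(y,x,x)]+[G(x,Tx,Tx)+G(Tx,x,x)])$, and use that $x\in F^{\epsilon}_G(T)$ forces $G(x,Tx,Tx)+G(Tx,x,x)<\epsilon$. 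Rearranging gives
\[
(1-\alpha)[G(x,y,y)+G(y,x,x)]\le \alpha\epsilon+\xi,
\]
so one may take $\psi(\xi)=\dfrac{\alpha\epsilon+\xi}{1-\alpha}$, which is positive because $\alpha\in(0,1/2)\subset(0,1)$.

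Lemma \ref{2.8} then yields $\delta(F^{\epsilon}_G(T))\le\psi(2\epsilon)=\dfrac{\alpha\epsilon+2\epsilon}{1-\alpha}=\epsilon\dfrac{2+\alpha}{1-\alpha}$, which is the desired bound. I do not expect a serious obstacle: the argument is essentially a translation of the proof of Theorem \ref{5.1} with the extra self-distance term $G(x,Tx,Tx)+G(Tx,x,x)$ coming from the defining inequality of a G-Mohseni-semi cyclical operator rather than from $G(Tx,Ty,Ty)+G(Ty,Tx,Tx)$. The only subtle point to keep track of is using the $\epsilon$-fixed point hypothesis on $x$ (not on $y$) precisely because the contraction condition carries the $[G(x,Tx,Tx)+G(Tx,x,x)]$ term on the right-hand side; had the inequality been symmetric in $x$ and $y$, an averaging step would be needed, but here a single application suffices.
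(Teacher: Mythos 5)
Your proposal is correct and follows essentially the same route as the paper: the author likewise verifies only condition (b) of Lemma \ref{2.8}, substitutes the G-Mohseni-semi cyclical inequality, bounds $G(x,Tx,Tx)+G(Tx,x,x)$ by $\epsilon$, and arrives at the same $\psi(\xi)=\frac{\alpha\epsilon+\xi}{1-\alpha}$ before evaluating at $\xi=2\epsilon$. Your explicit remarks that condition (a) follows from Theorem \ref{3.19} and that the $\epsilon$-fixed-point hypothesis is needed only on $x$ are small clarifications the paper leaves implicit, but the argument is the same.
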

{\bf Proof:} Let $\epsilon>0.$  We will only verify that condition 2) in Lemma \ref{2.8} is satisfied.
Let $\theta>0$ and $x,y \in F^{\epsilon}_G (T),$ and assume that $([G(x, y, y)+G(y, x, x)]-[G(Tx, Ty, Ty)+G(Ty, Tx, Tx)])\leq \theta.$ Then
\begin{eqnarray*}
[G(x, y, y)+G(y, x, x)]\leq  [G(Tx, Ty, Ty)+G(Ty, Tx, Tx)]+\theta\Rightarrow
\end{eqnarray*}
\begin{eqnarray*}
[G(x, y, y)+G(y, x, x)]\leq \alpha [[G(x, y, y)+G(y, x, x)]+[G(x, Tx, Tx)+G(Tx, x, x)] ]+\theta\Rightarrow
\end{eqnarray*}
$$(1-\alpha)[G(x, y, y)+G(y, x, x)]\leq\alpha [G(x, Tx, Tx)+G(Tx, x, x)]+\theta$$
\begin{eqnarray*}
[G(x, y, y)+G(y, x, x)]\leq \frac{\alpha \epsilon+\theta}{1-\alpha}
\end{eqnarray*}
So for every $\theta>0$ there exists $\phi(\theta)=\frac{\alpha \epsilon+\theta}{1-\alpha}>0$ such that 
\begin{eqnarray*}
[G(x, y, y)+G(y, x, x)]-[G(Tx, Ty, Ty)+G(Ty, Tx, Tx)]\leq \theta \Rightarrow [G(x, y, y)+G(y, x, x)]\leq\phi(\theta)
\end{eqnarray*}
Now by Lemma \ref{2.8}, it follows that $$\delta (F_{ \epsilon} (T)) \leq \phi(2\epsilon),\forall \epsilon>0,$$ which means exactly that
\begin{eqnarray*}
\delta (F_{ \epsilon} (T)) \leq \epsilon\frac{2+\alpha}{1-\alpha},~\forall \epsilon>0.~\blacksquare
\end{eqnarray*}

\begin{remark}  
{\it Examples  \ref{3.8} and \ref{4.12} holds in Theorem \ref{4.5}.}
\end{remark}


\end{document}